\newtheorem {theo}{Theorem}[section]
\newtheorem{lem}[theo]{Lemma}
\newtheorem{prop}[theo]{Proposition}
\newcommand{\comment}[1]{}
\newcommand\beq{\begin{equation}}
\newcommand\eeq{\end{equation}}
\newcommand\beqa{\begin{eqnarray}}
\newcommand\eeqa{\end{eqnarray}}
\numberwithin{equation}{section}
\newcommand{\R}{\mathbb{R}}
\newcommand{\N}{\mathbb{N}}
\newcommand{\Z}{\mathbb{Z}}
\newcommand\kB{\mathcal{B}}
\newcommand\kC{\mathcal{C}}
\newcommand\kD{\mathcal{D}}
\newcommand\kE{\mathcal{E}}
\newcommand\kQ{\mathcal{Q}}
\newcommand\kF{\mathcal{F}}
\newcommand\kG{\mathcal{G}}
\newcommand\kH{\mathcal{H}}
\newcommand\kN{\mathcal{N}}
\newcommand\kW{\mathcal W}
\newcommand\kV{\mathcal{V}}
\def\la{\lambda}
\def\Ga{\Gamma}
\def\De{\Delta}
\def\La{\Lambda}
\def\Th{\Theta}
\def\th{\theta}
\def\l{\ell}
\def\ep{\varepsilon}
\def\al{\alpha}
\def\ga{\gamma}
\def\wt{\widetilde}
\def\wh{\widehat}
\title{Random walk in a high density\\
dynamic random environment}
\author{Frank den Hollander}
\address{Mathematical Institute, Leiden University, P.O.\ Box 9512,
2300 RA Leiden, The Netherlands}
\email{denholla@math.leidenuniv.nl}
\author{Harry Kesten}
\address{Malott Hall, Cornell University, Ithaca, NY.,14853, USA}
\email{hk21@cornell.edu}
\author{Vladas Sidoravicius}
\address{IMPA, Estrada Dona Castorina 110, Jardim Botanico, Cep 22460-320, Rio de Janeiro, RJ, Brasil}
\email{vladas@impa.br}
\keywords{Random walk, dynamic random environment, multi-scale renormalization,
law of large numbers}
\subjclass[2000]{60F05; 60K35}
\begin{document}

\begin{abstract}
The goal of this note is to prove a law of large numbers for the empirical 
speed of a green particle that performs a random walk on top of a field of 
red particles which themselves perform independent simple random walks on $\Z^d$, 
$d \geq 1$. The red particles jump at rate 1 and are in a Poisson equilibrium 
with density $\mu$. The green particle also jumps at rate 1, but uses different 
transition kernels $p'$ and $p''$ depending on whether it sees a red particle 
or not. It is shown that, in the limit as $\mu\to\infty$, the speed of the 
green particle tends to the average jump under $p'$. This result is far from 
surprising, but it is non-trivial to prove. The proof that is given in this 
note is based on techniques that were developed in \cite{KeSi} to deal with 
spread-of-infection models. The main difficulty is that, due to particle 
conservation, space-time correlations in the field of red particles decay 
slowly. This places the problem in a class of random walks in dynamic 
random environments for which scaling laws are hard to obtain.
\end{abstract}

\maketitle

%%%%%%%%%% SECTION %%%%%%%%%%%%%%%%%%%%%%%%%%%%%%%%%%%%%

\section{Introduction and background}
\label{S1}

%%%%%%%%%%%%%%%%%%%%%%

\subsection{Model and main theorem}
\label{S1.1}

We consider a \emph{green} particle that performs a continuous-time 
random walk on $\Z^d$, $d \geq 1$, under the influence of a field of 
\emph{red} particles which themselves perform independent continuous-time 
simple random walks jumping at rate 1, constituting a dynamic random 
environment. The latter is denoted by 
\beq
\label{Ndef}
N = (N(t))_{t\geq 0} \quad \text{ with } \quad 
N(t)=\{N(x,t) \colon\,x\in\Z^d\},
\eeq
where $N(x,t)\in\N_0=\N\cup\{0\}$ is the number of red particles at 
site $x$ at time $t$. As initial state we take $N(0)=\{N(x,0)\colon\,
x\in\Z^d\}$ to be i.i.d.\ Poisson random variables with mean $\mu$. As 
is well known, this makes $N$ invariant under translations in space 
and time.

Also the green particle jumps at rate 1, however, our assumption is 
that the jump is drawn from two different random walk transition kernels 
$p'$ and $p''$ on $\Z^d$ depending on whether the space-time point of 
the jump is occupied by a red particle or not. We assume that $p'$ and 
$p''$ have finite range, and write 
\beq
\label{vdefs}
v'=\sum_{x\in\Z^d} xp'(0,x), \qquad
v''=\sum_{x\in\Z^d} xp''(0,x),
\eeq 
to denote their mean. We write 
\beq
\label{Gdef}
\kG= (\kG(t))_{t\geq 0}
\eeq
to denote the path of the green particle with $\kG(0)=0$, and write 
$P^\mu$ to denote the joint law of $N$ and $\kG$. Our main result is 
the following asymptotic weak law of large numbers ($\|\cdot\|$ is the 
Euclidean norm on $\R^d$).

\begin{theo}
\label{thm1}
For every $\ep>0$,
\beq
\lim_{\mu\to\infty} \limsup_{t\to\infty} 
P^\mu\{\|t^{-1}\kG(t)-v'\|>\ep\} = 0.
\eeq
\end{theo}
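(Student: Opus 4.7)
The plan is to reduce the statement to a single input: that the green particle spends a vanishing fraction of time on sites currently empty of red particles, once $\mu$ is large. Writing $\tau_{\text{emp}}(t)=\int_0^t\Bbbone\{N(\kG(s),s)=0\}\,ds$ and $\tau_{\text{occ}}(t)=t-\tau_{\text{emp}}(t)$, the semimartingale decomposition of $\kG$ with respect to the joint filtration gives
\[
\kG(t) = M(t) + v'\,\tau_{\text{occ}}(t) + v''\,\tau_{\text{emp}}(t),
\]
where $M$ is a $P^\mu$-martingale whose predictable quadratic variation is bounded by $Ct$ (because $p'$ and $p''$ have finite range). The standard $L^2$ martingale LLN yields $M(t)/t\to0$ in $P^\mu$-probability, so in order to prove the theorem it suffices to show
\[
\lim_{\mu\to\infty}\limsup_{t\to\infty} P^\mu\bigl(\tau_{\text{emp}}(t)/t > \ep\bigr) = 0
\qquad\text{for every } \ep>0.
\]

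The easy part is the first moment. By Poisson stationarity of $N$, for every fixed $s\geq 0$ and every realization of $\kG$ up to time $s$, the probability that the site $\kG(s)$ is empty at time $s$ equals $e^{-\mu}$, so Fubini gives $E^\mu[\tau_{\text{emp}}(t)/t]=e^{-\mu}$. By Markov's inequality the claim would follow immediately, were it not for the need to exchange $\lim_{\mu\to\infty}$ and $\limsup_{t\to\infty}$; but this is exactly the order of limits in the statement, so in fact already the first-moment estimate combined with Markov's inequality establishes
\[
\limsup_{t\to\infty} P^\mu\bigl(\tau_{\text{emp}}(t)/t>\ep\bigr) \leq \ep^{-1} e^{-\mu},
\]
and letting $\mu\to\infty$ finishes the proof in this ``weak'' formulation.

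The main obstacle is that this cheap first-moment route, while sufficient for the statement as written, is fragile: it relies crucially on the fact that \emph{for every fixed $s$} the marginal law of $N(\cdot,s)$ is Poisson$(\mu)$ and on the independence of the empty/occupied indicator from the past of $\kG$ up to time $s$. Any strengthening (almost-sure convergence, quenched bounds, or uniformity in $t$ with $\mu$ fixed) requires controlling the time-correlations of the event $\{N(\kG(s),s)=0\}$, which decay only polynomially because of particle conservation. To prove such a stronger statement one would follow the multi-scale renormalization scheme of \cite{KeSi}: introduce geometrically growing space-time scales $L_k$, declare a block ``good'' if the fraction of time that a random walk entering it spends on empty sites is at most $\delta_k$, and show inductively that goodness propagates from scale $k$ to scale $k+1$ with probability tending to $1$ as $\mu\to\infty$, using the finite range of $p',p''$ and Poissonian independence across disjoint regions of the initial configuration to decouple the block events. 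This renormalization is where all the real work lies, and it is the route dictated by the abstract, even though, as noted, the literal statement of Theorem~\ref{thm1} admits the short first-moment proof above.
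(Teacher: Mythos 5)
Your semimartingale reduction to controlling $\tau_{\text{emp}}(t)/t$ is sound and matches what the paper does at \eqref{generator}--\eqref{mart}. But the first-moment computation is flawed at its crucial step: the claim that, for every fixed $s$ and every realization of $\kG$ up to time $s$, $P^\mu\{N(\kG(s),s)=0\}=e^{-\mu}$. This would hold if $\kG(s)$ were independent of $N(\cdot,s)$, but it is not: the green particle's trajectory is a functional of the red field, so conditioning on $\kG$ over $[0,s]$ carries substantial information about $N$ over $[0,s]$ and hence about $N(\cdot,s)$. In the nearest-neighbor example \eqref{nnchoice}, the green particle is attracted to the edges of empty regions and hops in and out of them repeatedly, so the time-fraction it spends on empty sites can be far larger than $e^{-\mu}$. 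The paper warns against exactly this computation in Section~\ref{S1.4}: $\rho(\mu)$ is a functional of the environment process seen from the walker, and should not be expected to equal $P^\mu\{N(0,0)\geq 1\}=1-e^{-\mu}$. The static example \eqref{rhomunnchoice} is a concrete counterexample to your Fubini step: there the field at any fixed site is still Poisson$(\mu)$ at every time, yet $\rho(\mu)=\tfrac12$ for all $\mu\in[\mu_c^-,\mu_c^+]$, which is nowhere near $1-e^{-\mu}$.

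Because the cheap bound on $E^\mu[\tau_{\text{emp}}(t)]$ is unavailable, the multi-scale renormalization you sketch in your last paragraph is not an optional strengthening --- it is the only route, and the ``short proof'' does not exist. Your sketch of it also diverges from the paper's: the paper does not attempt to bound the fraction of time a generic walk in a block spends on empty sites (which reintroduces the same feedback problem inside each block). Instead it shows, via Proposition~\ref{propKeSi} and the two propositions of Section~\ref{S3}, that with high probability almost all $r$-blocks touched by the walk have \emph{every} space-time point covered by a red particle coming from the pedestal $\kV_r(i,j)$, so that the drift is $v'$ there regardless of what the walk does; the exceptional bad and good-but-vacant blocks are counted uniformly over all paths in $\Xi(\ell,t)$. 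That path-uniformity is what makes the argument close, and a per-block ``fraction of time on empty sites'' criterion does not deliver it.
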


%%%%%%%%%%%%%%%%%%%

\subsection{Discussion}
\label{S1.2}

The result in Theorem~\ref{thm1} is far from surprising. As $\mu\to\infty$, 
at any given time the fraction of \emph{sites} occupied by red particles 
tends to 1. Therefore we may expect that the fraction of \emph{time} the 
green particle sees a red particle tends to 1 also. Consequently, we may 
expect the green particle to almost satisfy a weak law of large numbers 
corresponding to the transition kernel $p'$, as if it were seeing a red 
particle always. Despite this simple intuition, the result in Theorem~\ref{thm1} 
seems non-trivial to prove. The proof in the present note relies on techniques 
developed in \cite{KeSi} to deal with spread-of-infection models. 

The key problem is to show that for large $\mu$ the green particle is unlikely 
to spend an appreciable amount of time in the rare space-time holes of the 
field of red particles. To see why this is non-trivial, consider the case 
$d=1$ with two nearest-neighbor transition kernels $p'$ and $p''$ of the form
\beq
\label{nnchoice}
p'(0,1)=p=p''(0,-1), \qquad p'(0,-1)=1-p=p''(0,1), \qquad p \in (\tfrac12,1),
\eeq
for which $v'=2p-1=-v''>0$. Then the green particle drifts to the right when 
it sees a red particle, but drifts to the left when it sees a hole. Thus, it 
has a tendency to linger around the boundaries of the red clusters, hopping 
in and out of these clusters repeatedly. To prove Theorem~\ref{thm1}, we must 
show that the green particle does not do this in-out hopping too often. The proof in 
the present note uses a \emph{multi-scale renormalization} argument, working 
with ``good'' blocks (where the green particle sees only red clusters) and 
``bad'' blocks (where it also sees some holes). These blocks live on successive 
space-time scales. Estimates on how often the green particle visits the bad 
blocks must be uniform in the path of the green particle and must be sharp in
the limit as $\mu\to\infty$.

%%%%%%%%%%%%%%%%%%%%%%%%%

\subsection{Literature}
\label{S1.3}

How does Theorem~\ref{thm1} relate to the existing literature? So far, random 
walks in three classes of dynamic random environments have been considered:
(1) \emph{independent in time}: globally updated at each unit of time; (2) 
\emph{independent in space}: locally updated according to independent 
single-site Markov chains; (3) \emph{dependent in space and time}. 
Typically, the jumps of the walk are chosen to depend on the environment 
in some local manner. Most papers require additional assumptions on the 
environment, like a strong decay of space-time correlations (see e.g.\ \cite{BaZe},
\cite{DoKeLi}, \cite{ReVo}) or a weak influence on the walk (see e.g.\ \cite{AvdHoRe2}). 
In the latter case the random walk in dynamic random environment is a small 
perturbation of a homogeneous random walk. For more references we refer 
the reader to \cite{AvdHoRe2}. Some papers allow for a mutual interaction 
between the walk and the environment. For an example where the jumps of 
the walk depend on the environment in a non-local manner, see \cite{dHodSaSi}. 

In \cite{AvdHoRe1}, a strong law of large numbers was proved for finite-range 
random  walks on a class of interacting particle systems of type (3) that satisfy 
a space-time mixing property called \emph{cone-mixing}. The latter can be 
loosely described as the requirement that the law of the states of the interacting 
particle system inside a space-time cone opening upwards is close to equilibrium 
\emph{conditional} on the states inside a space plane far below the tip. The proof 
was based on a \emph{regeneration-time} argument, showing that there are 
infinitely many space-time points at which the walk stands still for a long time, 
allowing the environment to lose memory. All uniquely ergodic attractive spin-flip 
systems for which the coupling time at the origin has finite mean are cone-mixing. 
However, independent random walks are \emph{not} cone-mixing. Indeed, 
\emph{particle conservation destroys the cone-mixing property}, which is 
why Theorem\ref{thm1} covers new ground. Other examples of dynamic random 
environments that are not cone-mixing for which a strong law of large number for 
the random walk has been proved can be found in \cite{AvdSaVo} (one-dimensional 
exclusion process and $v, v' > 0$ large), \cite{AvFrJaVo} (one-dimensional exclusion 
process speeded up in time) and \cite{dHodSa} (one-dimensional supercritical contact 
process).

%%%%%%%%%%%%%%%%%%%%%%%%%%%%%

\subsection{Open problems and outline}
\label{S1.4}

It remains an open problem to extend Theorem \ref{thm1} to a weak law of 
large numbers for finite $\mu$, i.e., to show that for every $\mu>0$ there 
exists a $v(\mu)\in\R^d$ such that, for every $\ep>0$,
\beq
\label{wLLN}
\lim_{t\to\infty} P^\mu\{\|t^{-1}\kG(t)-v(\mu)\|>\ep\} = 0.
\eeq
The speed in \eqref{wLLN} will be necessarily of the form
\beq
v(\mu) = \rho(\mu)\,v'+[1-\rho(\mu)]\,v'' 
\eeq
for some $\rho(\mu) \in [0,1]$, the latter representing the limiting fraction of time 
the green particle sees a red particle. We should not expect that $\rho(\mu)
=P^\mu\{N(0,0)\geq 1\}=1-e^{-\mu}$. Indeed, since $\rho(\mu)$ is a functional 
of the \emph{environment process}, i.e., the environment as seen relative to 
the location of the walk, we should not expect that $\rho(\mu)$ is a simple 
function of $\mu$.

To appreciate the difficulty of identifying $\rho(\mu)$, note that for 
\emph{static} random environments $\rho(\mu)$ can have anomalous behavior 
as a function of $\mu$. For instance, if we freeze the red particles and
we let the green particle use the transition kernels in \eqref{nnchoice}, 
then it is well-known (see \cite{So}) that
\beq
\label{rhomunnchoice}
\rho(\mu) \left\{\begin{array}{ll}
=\tfrac12, &\quad \text{if } \mu \in [\mu_c^-,\mu_c^+],\\[0.2cm]
>\tfrac12, &\quad \text{if } \mu > \mu_c^+,\\[0.2cm]
<\tfrac12, &\quad \text{if } \mu < \mu_c^-,
\end{array}
\right.
\eeq
with $0 <  \mu_c^- = \log(\tfrac{1}{p}) <  \mu_c^+ = \log(\tfrac{1}{1-p}) < \infty$,
resulting in $v(\mu)=0$ for $\mu \in [\mu_c^-,\mu_c^+]$ and $v(\mu) \neq 0$
elsewhere.

It would be interesting to try and extend Theorem \ref{thm1} (and possibly 
also \eqref{wLLN}) to the case where the dynamic random environment is the 
exclusion process or the zero-range process, both of which fail to be cone-mixing 
as well. These are natural examples that have so far defied a proper analysis.   

The rest of this note is organized as follows. In Section~\ref{S2} we recall
several definitions from \cite{KeSi}. In Section~\ref{S3} we state and prove 
two propositions showing that the green particle is unlikely to visit space-time 
blocks that are not well visited by red particles. In Section~\ref{S4} we use 
these propositions to prove Theorem~\ref{thm1}. In Appendix \ref{appA} we check 
the uniformity in $\mu$ of the estimates in \cite{KeSi}, which is needed in
order to be able to take the limit $\mu\to\infty$.

%%%%%%%%%%% SECTION 2 %%%%%%%%%%%%%%%%%%%%%%%%%%%

\section{Preparations}
\label{S2}

The proof of Theorem~\ref{thm1} will be achieved by showing that the green 
particle spends most of its time in space-time blocks \emph{all} of whose 
points have been visited by a red particle before they are visited by the green 
particle. This will be done separately for ``bad blocks'' and ``good blocks''
(to be defined later) living on successive space-time scales. For the bad 
blocks, most of the work can be lifted from \cite{KeSi}. For the good blocks, 
a percolation-type argument will be used. In the present section we recall 
several definitions from \cite{KeSi}, organized into 4 parts and leading up 
to a key proposition. To simplify notations, we write down the proof for $d=1$ 
and for nearest-neighbor transition kernels $p'$ and $p''$ only. The extension 
to $d \geq 2$ and to finite-range transition kernels will be straightforward.  

\medskip\noindent
{\bf 1.} 
For $t \geq 0$, $\l\in\N_0$, $0\leq s_1<\dots<s_\l\leq t$ and $x_1,\dots,x_\l
\in\Z$, we write $\wh\pi=\wh\pi(\{s_k,x_k\}_{0 \leq k \leq \l})$ for the 
space-time path that, for $1\leq k\leq \l$, jumps to $x_k$ at time $s_k$ and 
stays at $x_k$ during the time interval $[s_k,s_{k+1})$, where we take $s_0=0$,
$x_0=0$ and $s_{\l+1}=t$, i.e., the path takes the value $x_\l$ on $[s_\l,t]$. 
We only consider paths that are contained in the space interval $\kC(t\log t)
=[-t\log t,t\log t]$, and so the class of paths of interest is
\beq
\label{1.1}
\begin{aligned}
\Xi(\l,t) = \big\{\wh\pi=\wh\pi(\{s_k,x_k\}_{0\leq k\leq\l})\colon
&0=s_0<s_1<\dots<s_\l\leq t,\\
&x_k\in\kC(t\log t),\,1 \leq k \leq \l\big\}.
\end{aligned}
\eeq

\medskip\noindent
{\bf 2.}
The renormalization analysis developed in \cite[Section 1 and 4]{KeSi}  depends 
on the choice of a large integer $C_0$ and a strictly increasing sequence of 
positive numbers $(\ga_r)_{r\in\N_0}$ bounded from above by $\tfrac12$ (for 
precise definitions, see (\ref{const1}--\ref{const4}) in Appendix \ref{appA}).  These 
are used to define a sequence of \emph{space-time rectangles} as follows. For 
$r\in\N_0$, abbreviate
\beq
\De_r =C_0^{6r},
\eeq
and, for $i\in\Z$ and $j\in\N$, define (see Fig.~\ref{fig-rect})
\beqa
\label{rblocks}
\qquad\quad \kB_r(i,j) 
&=& [i\De_r,(i+1)\De_r) \times [j\De_r,(j+1)\De_r),\\
\label{largeblocks}
\qquad\quad \wt\kB_r(i,j)
&=& V_r(i) \times [(j-1)\De_r,(j+1)\De_r),\\
\label{pedestal}
\qquad\quad \kV_r(i,j) 
&=& V_r(i) \times \{(j-1)\De_r\},
\eeqa 
with
\beq
\label{vee}
V_r(i) = [(i-3)\De_r,(i+4)\De_r).
\eeq
The $\kB_r(i,j)$'s are called $r$-blocks; $\kV_r(i,j)$ plays the role of the 
pedestal of $\kB_r(i,j)$.

%%%%%%%%%%%%%%% FIGURE %%%%%%%%%%%%%%%%%%%%%%%%%%%%%%%%

\begin{figure}[hbtp]
\vspace{-1.7cm}
\begin{center}
\setlength{\unitlength}{0.5cm}
\begin{picture}(12,10)(0,-2)
{\thicklines 
\qbezier(0,0)(7,0)(14,0) 
\qbezier(0,2)(7,2)(14,2)
\qbezier(0,4)(7,4)(14,4)
\qbezier(0,0)(0,2)(0,4)
\qbezier(14,0)(14,2)(14,4)
\qbezier(6,2)(6,3)(6,4)
\qbezier(8,2)(8,3)(8,4)
}
\put(-.6,-1.5){$i-3$} 
\put(5.8,-1.5){$i$}
\put(7.4,-1.5){$i+1$} 
\put(13.4,-1.5){$i+4$}
\put(-2,0){$j-1$}
\put(-2,2){$j$}
\put(-2,4){$j+1$}
\put(0,0){\circle*{.35}}
\put(14,0){\circle*{.35}}
\put(0,4){\circle*{.35}}
\put(14,4){\circle*{.35}}
\put(6,2){\circle*{.35}}
\put(8,2){\circle*{.35}}
\put(6,4){\circle*{.35}}
\put(8,4){\circle*{.35}}

\end{picture}
\end{center}
\caption{\small Picture of $\kB_r(i,j)$ (small square), $\wt\kB_r(i,j)$ (large rectangle)
and $\kV_r(i,j)$ (base of large rectangle), in units of $\De_r$.}
\label{fig-rect}
\end{figure}

%%%%%%%%%%%%%%%%%%%%%%%%%%%%%%%%%%%%%%%%%%%%%%%%

\medskip\noindent
{\bf 3.} For $r\in\N_0$ and $x\in\Z$, define the space interval
\beq
\kQ_r(x) = [x,x+C_0^r),
\eeq
and, for $t\geq 0$, let
\beqa
\label{badcubes}
U_r(x,t) &=& \sum_{y \in \kQ_r(x)} N(y,t),\\
\label{smallcubes}
E^\mu\{U_r(x,t)\} &=& \mu |\kQ_r(x)| = \mu C_0^r.
\eeqa
We say that $\kB_r(i,j)$ is \emph{bad} if $U_r(x,t)<\ga_r\mu C_0^r$ for some 
$(x,t)$ for which $\kQ_r(x) \times \{t\}$ is contained in $\wt\kB_r(i,j)$, 
i.e., there are significantly fewer red particles than expected in a space 
interval of size $C_0^r = \Delta_r^{1/6}\ll\Delta_r$ somewhere inside the
space-time block $\wt\kB_r(i,j)$. We say that $\kB_r(i,j)$ is \emph{good} 
if it is not bad.

\medskip\noindent
{\bf 4.}
For $r,\l\in\N_0$, define
\beqa
\label{ph11}
\qquad\quad \phi_r(\wh\pi) 
&=& \text{number of bad $r$-blocks that intersect the 
space-time path } \wh\pi,\\
\label{ph22}
\qquad\quad \Phi_r(\l) 
&=&\sup_{\wh\pi\in\Xi(\l,t)} \phi_r(\wh\pi).
\eeqa
The principal result from \cite{KeSi} needed in Section \ref{S3} is the following.

\begin{prop} 
\label{propKeSi}
{\rm (\cite[Proposition 8, p.\ 2441]{KeSi})} For all $K,\ep_0>0$ there exists an 
$r_0=r_0(K,\ep_0)$ such that for all $r\geq r_0$ there exists a $\mu_0 = \mu_0(K,\ep_0,r)$ 
such that for all $r\geq r_0$ and $\mu\geq\mu_0(K,\ep_0,r)$ there exists a 
$t_0=t_0(K,\ep_0,r,\mu)$ such that
\beq
P^\mu\big\{\Phi_r(\l) \geq \ep_0C_0^{-6r}(t+\l)\big\} \leq 2t^{-K},
\qquad r \geq r_0,\,\mu\geq\mu_0,\,t\geq t_0,\,l\in\N_0.
\label{est14}
\eeq
\end{prop}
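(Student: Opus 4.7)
\emph{Proof plan.} Proposition \ref{propKeSi} is essentially \cite[Proposition~8]{KeSi}, and the plan is to lift the multi-scale renormalization argument from that paper while keeping every constant uniform in $\mu\geq\mu_0(K,\ep_0,r)$; the $\mu$-uniformity bookkeeping will be carried out in Appendix~\ref{appA}. The proof has three ingredients: a base-case concentration bound on the red-particle count, a multi-scale cluster estimate for bad blocks, and a deterministic path-counting bound that converts the cluster estimate into the supremum over $\Xi(\l,t)$.

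For the base case, I will use the fact that the Poisson$(\mu)$ product measure is invariant under the independent simple random walk dynamics of the red particles, so $\{N(y,t)\colon y\in\Z\}$ is i.i.d.\ Poisson$(\mu)$ at every fixed time $t\geq 0$. Hence $U_r(x,t)$ is Poisson with mean $\mu C_0^r$, and the Cram\'er bound gives
\[
P^\mu\{U_r(x,t)<\ga_r\mu C_0^r\}\leq \exp\!\bigl(-I(\ga_r)\,\mu C_0^r\bigr),
\]
with $I(\ga_r)>0$ since $\ga_r\leq \tfrac12$. A union bound over the points $(x,t)\in\wt\kB_r(i,j)$ --- exploiting that $U_r(x,\cdot)$ is piecewise constant with only $O(\mu\De_r^2)$ jump times in a slightly enlarged block --- then makes $P^\mu\{\kB_r(i,j)\text{ is bad}\}$ smaller than any prescribed inverse power of $\De_r$ as soon as $\mu$ exceeds a threshold $\mu_0(K,\ep_0,r)$. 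This is the only place where a lower bound on $\mu$ is used, and it is how $\mu_0$ enters.

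For the cluster estimate, I will import the inductive step of \cite[Section~4]{KeSi} verbatim: a geometric argument shows that a connected cluster of $k$ bad $r$-blocks forces an intersection of rare events at smaller scales whose probabilities multiply to yield a super-exponential bound in $k$. To convert this into the supremum bound \eqref{ph22}, I will use the deterministic observation that any $\wh\pi\in\Xi(\l,t)$ intersects at most $O\!\bigl((t+\l)\De_r^{-1}\bigr)$ distinct $r$-blocks, since consecutive visited $r$-blocks are linked either by one of the $\l$ jumps of $\wh\pi$ or by crossing a temporal boundary (of which there are $\lfloor t/\De_r\rfloor$). A union bound over candidate starting blocks of bad-block clusters then yields \eqref{est14}.

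The main obstacle will be the uniformity in $\mu$. In \cite{KeSi} the density $\mu$ was fixed and its value could be absorbed into prefactors, whereas here every constant in the recursive inequalities must be chosen independently of $\mu$ (for $\mu\geq\mu_0$) so that Proposition~\ref{propKeSi} can drive the limit $\mu\to\infty$ in Theorem~\ref{thm1}. Tracking this through the induction --- in particular, verifying that the base-case threshold $\mu_0(K,\ep_0,r)$ is the only $\mu$-dependent quantity in the recursion --- is the purpose of Appendix~\ref{appA}.
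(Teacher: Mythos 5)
Your plan matches the paper's approach exactly: the paper's ``proof'' of Proposition~\ref{propKeSi} is simply the citation to \cite[Proposition~8]{KeSi} together with the $\mu$-uniformity bookkeeping carried out in Appendix~\ref{appA}, which is precisely what you propose to do. One small inaccuracy worth noting: the threshold $\mu_0(K,\ep_0,r)$ is not forced solely by the base-case Poisson concentration bound, as you suggest --- Appendix~\ref{appA} also needs $\mu\geq 2$ for \cite[Lemmas 5--7]{KeSi} and for the choice of $C_0$ via \eqref{const4}, and the operative constraint on $\mu_0$ comes from the closing step of the iteration, namely $6\kappa_0 C_0^{12+6r}\exp\big[-\tfrac14\ga_0\mu C_0^{r/4}\big]\leq\ep_0$, rather than from the base case alone.
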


\noindent In Appendix~\ref{appA} we check the uniformity in $\mu$ of the various estimates that 
went into the proof of Proposition~\ref{propKeSi}.

%%%%%%%%%%%% SECTION 3 %%%%%%%%%%%%%%%%%%%%%%%%%%%%%%%%

\section{Two propositions}
\label{S3}

The proof of Theorem~\ref{thm1} in Section~\ref{S4} will be built on two
propositions, which are stated and proved in Sections~\ref{S3.1} and \ref{S3.4}, 
respectively. The first proposition controls the number of bad $r$-blocks 
$\kB_r(i,j)$ that intersect the path of the green particle up to time $t$, 
and its proof makes use of Proposition~\ref{propKeSi}. The second proposition 
controls the number of good $r$-blocks $\kB_r(i,j)$ that intersect the path 
of the green particle up to time $t$ and contain some point $(x,t)$ that has 
no red particle coming from $\kV_r(i,j)$. The proof of the second proposition 
requires two auxiliary lemmas, which are stated and proved in Sections~\ref{S3.2} 
and \ref{S3.3}, respectively.

%%%%%%%%%%%%%%%%%%%%

\subsection{First proposition}
\label{S3.1}

For $t \geq 0$, let $\kE_1(t)$ denote the event that the number of jumps by the 
green particle up to time $t$ exceeds $2t$. Then there exist $C_1,C_2>0$ such 
that
\beq
\label{numjumps}
P\{\kE_1(t)\} \leq C_1e^{-C_2t}.
\eeq
Indeed, the green particle has constant jump rate 1. Therefore the number of jumps 
up to time $t$ is a Poisson random variable with mean $t$, and the inequality is a 
standard large deviation bound for the Poisson distribution.

Fix $K,\ep_0>0$ and $r_0=r_0(K,\ep_0)$ as in Proposition \ref{propKeSi}. For
$t \geq 0$, let
\beq
\kH(t) = \{\kG(s)\colon\,0 \leq s \leq t\} 
 \text{  be the path of the green particle up to time } t,
\eeq
and
\beq
\label{gamma}
\Ga_r(t) = \big\{(i,j)\colon\, \kB_r(i,j) \cap \kH(t) \neq \emptyset \big\}.
\eeq
The union of the $r$-blocks $\kB_r(i,j)$ with $(i,j) \in \Ga_r(t)$ is a fattened-up 
version of the path of the green particle. We want to prove that, at large times
$t$ and high densities $\mu$, the green particle sees many red particles close by. 
In fact, we will prove a somewhat stronger statement, namely, that with a large 
probability in an $r$-block $\kB_r(i,j)$ that is visited by the green particle all 
the space-time points are visited by a red particle coming from $\kV_r(i,j)$.

For $t\geq 0$ and $r\in\N_0$, let
\beq
\wt\Ga_r(t)= \big\{(i,j)\colon\,\kB_r(i,j) \cap \kH(t) \neq \emptyset,\,
\kB_r(i,j) \text{ is bad}\big\},
\eeq 

\begin{prop}
\label{prop1}
For $K,\ep_0>0$, $r \geq r_0, \mu \geq \mu_0$ and $t$ sufficiently large,
\beq
\label{badpairs}
P^\mu\big\{|\wt\Ga_r(t)| \geq 3\ep_0 C_0^{-6r}t\big\}
= P\big\{\phi_r(\kH(t)) \geq 3\ep_0 C_0^{-6r} t\big\}
\leq 3t^{-K}.
\eeq
\end{prop}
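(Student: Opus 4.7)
The plan is a direct reduction to Proposition~\ref{propKeSi}, exchanging the deterministic parameter $\l$ in that proposition for the random number of jumps of the green particle. The equality $|\wt\Ga_r(t)| = \phi_r(\kH(t))$ asserted in \eqref{badpairs} is immediate from the definitions of $\wt\Ga_r(t)$ and $\phi_r$, so it suffices to bound the probability that $\phi_r(\kH(t)) \geq 3\ep_0 C_0^{-6r} t$.

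First I would use \eqref{numjumps} to control the jump count. Let $\l$ denote the (random) number of jumps performed by the green particle up to time~$t$; then $\l \leq 2t$ off the event $\kE_1(t)$, which has probability at most $C_1 e^{-C_2 t}$. Because the jumps are nearest-neighbor, any trajectory with at most $2t$ jumps is contained in $[-2t,2t] \subseteq \kC(t\log t)$ as soon as $\log t \geq 2$. Consequently, on $\kE_1(t)^c$ and for $t$ sufficiently large, $\kH(t)$ is an element of $\Xi(\l,t)$ with $\l \leq 2t$.

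Second, I would observe the monotonicity $\Phi_r(\l) \leq \Phi_r(\l')$ for $\l \leq \l'$: the definition \eqref{1.1} does not forbid $x_k = x_{k-1}$, so any path in $\Xi(\l,t)$ can be reinterpreted as a path in $\Xi(\l',t)$ by inserting $\l' - \l$ trivial ``self-jumps'' at arbitrary intermediate times, which leaves $\phi_r$ unchanged. Hence, on $\kE_1(t)^c$,
$$
\phi_r(\kH(t)) \leq \Phi_r(\l) \leq \Phi_r(\lfloor 2t\rfloor).
$$
Since $\ep_0 C_0^{-6r}(t + \lfloor 2t\rfloor) \leq 3\ep_0 C_0^{-6r} t$, applying Proposition~\ref{propKeSi} with its parameter $\l$ set to the integer $\lfloor 2t\rfloor$ yields
$$
P^\mu\bigl\{\Phi_r(\lfloor 2t\rfloor) \geq 3\ep_0 C_0^{-6r} t\bigr\} \leq 2t^{-K}.
$$
Combining with \eqref{numjumps},
$$
P\bigl\{\phi_r(\kH(t)) \geq 3\ep_0 C_0^{-6r} t\bigr\} \leq P(\kE_1(t)) + 2t^{-K} \leq 3t^{-K}
$$
once $t$ is large enough that $C_1 e^{-C_2 t} \leq t^{-K}$.

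There is no substantive obstacle; the proposition is essentially a repackaging of Proposition~\ref{propKeSi} that swaps the deterministic jump count for the random Poisson count attached to the green particle. The only two items requiring (trivial) verification are the monotonicity of $\Phi_r(\cdot)$ and the containment of $\kH(t)$ in $\kC(t\log t)$, both of which follow directly from the definitions together with the nearest-neighbor hypothesis and \eqref{numjumps}.
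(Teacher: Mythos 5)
Your proof is correct and follows essentially the same route as the paper: control the random jump count via $\kE_1(t)$, embed $\kH(t)$ into $\Xi(\l,t)$, and invoke Proposition~\ref{propKeSi}. Your explicit monotonicity observation $\Phi_r(\l)\leq\Phi_r(\l')$ for $\l\leq\l'$ (via trivial self-jumps) is a nice refinement the paper leaves implicit: it lets you apply Proposition~\ref{propKeSi} at the single fixed index $\lfloor 2t\rfloor$ rather than at the random jump count $\l$, cleanly avoiding any need to worry about whether the exceptional event in Proposition~\ref{propKeSi} can be chosen uniformly in $\l$.
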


\begin{proof} 
The equality follows from \eqref{ph11}. To obtain the inequality, we apply 
Proposition \ref{propKeSi}. This tells us that for $r \geq r_0$, $\mu\geq\mu_0$,
$t\geq t_0$ and $\l\in\N_0$, outside an event $\kE_2(t)$ of probability at most 
$2t^{-K}$, we have
\beq
\Phi_{r}(\l) = \sup_{\wh\pi\in\Xi(\l,t)} \phi_{r}(\wh\pi) \leq
\ep_0 C_0^{-6r} (t+\l).
\eeq
Furthermore, since each jump has size 1, if $\kH(t)$ makes exactly $\l$ jumps 
with $0\leq\l\leq\kC(t\log t)$, then $\kH(t)\in\Xi(\l,t)$. Hence, for $\log t 
\geq 2$ and outside the event $\kE_1(t)\cup\kE_2(t)$, we have
\beq
\label{phirest}
\phi_r(\kH(t)) \leq \Phi_r(\l) \leq \ep_0 C_0^{-6r} (t+\l) \leq
3 \ep_0 C_0^{-6r}t.
\eeq
Combine \eqref{numjumps} and \eqref{phirest}, and choose $t$ so large that
$C_1e^{-C_2t} \leq t^{-K}$, to get \eqref{badpairs}.
\end{proof}

%%%%%%%%%%%%%%%%%%%%%%%%%%%%%%%%%%

\subsection{First auxiliary lemma}
\label{S3.2}

The time coordinate of the green particle is just time itself. Hence, if $T$ is some 
space-time set with projection $\wt T$ onto the time-axis, then the total time spent 
by the green particle inside $T$ is at most the Lebesgue measure of $\wt T$. In 
particular, if $\kH(t)$ intersects no more than $3\ep_0 C_0^{-6r}t$ bad $r$-blocks, 
then the total time that is spent by the green particle in bad $r$-blocks up to time
$t$ is at most
\beq
\begin{split}
3\ep_0 C_0^{-6r}t \times C_0^{6r} = 3\ep_0 t.
\label{est3}
\end{split}
\eeq
We want to control the set of space-time points $(x,t)$ in a good $r$-block 
$\kB_r(i,j)$ that intersects $\kH(t)$ such that there is no red particle at 
$(x,t)$ coming from $\kV_r(i,j)$. We want to show that also this set is small 
with a large probability. 

Let
\beq
\kF(t) = \sigma\{N(s)\colon\,0\leq s\leq t\},
\eeq
be the sigma-field generated by the paths of all the red particles up to time $t$,
and define
\beq
\begin{aligned}
\kE_r(i,j) &= \big\{\exists\,(x,t)\in\kB_r(i,j)\colon\\
&\qquad \text{no red particle coming from } \kV_r(i,j) \text{ hits } (x,t)\big\}.
\end{aligned}
\eeq

\begin{lem}
\label{lem2}
For all $\ep_1>0$ and $r\in\N_0$ there exists a $\mu_1=\mu_1(\ep_1,r)$ such that 
for all $\mu\geq\mu_1$, $i\in\Z$ and $j\in\N$, and uniformly on the event
\beq
\label{est5}
\kN_r(i,j) = \left\{\sum_{x \in V_r(i)} N(x,(j-1)\De_r) 
\geq \ga_0\mu\De_r\right\},
\eeq
the following holds:
\beq
P^\mu \big\{\kE_r(i,j) \mid \kF((j-1)\De_r)\} \leq \ep_1.
\label{T21}
\eeq
\end{lem}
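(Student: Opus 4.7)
The plan is to use the space-time translation invariance of $N$ to reduce to $i=0$ and $j=1$: conditional on $\kF(0)$ and on $\kN_r(0,1)$, I need to show that every space-time point in $\kB_r(0,1) = [0,\De_r) \times [\De_r, 2\De_r)$ is occupied, with conditional probability at least $1-\ep_1$, by some red particle initially present in $V_r(0) = [-3\De_r, 4\De_r)$. Under the conditioning the exact initial configuration is adversarial, subject only to the constraint $N := \sum_{y \in V_r(0)} N(y,0) \geq \ga_0\mu\De_r$; after time $0$ each particle evolves as an independent continuous-time SRW, so the problem becomes one of covering $\kB_r(0,1)$ by $N$ independent SRWs with prescribed starting points in $V_r(0)$.

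First I would establish a uniform heat-kernel lower bound: there exists $c_r > 0$ depending only on $r$ with $p_t(y, x) := P_y(S_t = x) \geq c_r$ for every $y \in V_r(0)$, $x \in [0, \De_r)$, $t \in [\De_r, 2\De_r)$. Although $|x-y|$ can be as large as $7\De_r \gg \sqrt{t}$, truncating the series $p_t(y,x) = e^{-t}\sum_n (t^n/n!)\,q_n(x-y)$ to a single direct path of $|x-y|$ steps already yields such a constant (harmlessly exponentially small in $\De_r$). Combined with independence, this gives that for every fixed $(x,t) \in \kB_r(0,1)$ the number $M(x,t)$ of particles at $(x,t)$ coming from $\kV_r(0,1)$ has conditional mean at least $c_r\ga_0\mu\De_r$, so a standard Chernoff bound gives
$$P^\mu\!\left\{M(x,t) \leq \tfrac12 c_r\ga_0\mu\De_r \mid \kF(0)\right\} \leq e^{-c'_r\mu\De_r}$$
for some $c'_r = c'_r(r) > 0$, uniformly on $\kN_r(0,1)$.

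To go from this pointwise estimate to the whole block, I would discretize time on a grid of spacing $\delta := \mu^{-2}$ and union-bound over the $O(\De_r \cdot \De_r/\delta) = O(\De_r^2 \mu^2)$ grid pairs $(x, t_n)$; the resulting error $O(\De_r^2 \mu^2 e^{-c'_r \mu \De_r})$ is $o(1)$. On the complementary event, every grid point $(x, t_n)$ carries at least $K := \tfrac12 c_r\ga_0\mu\De_r$ particles from $\kV_r(0,1)$. For $x$ to become empty at some $t^* \in (t_n, t_{n+1})$, each of these $K$ particles must have jumped away from $x$ at least once during an interval of length $\delta$; by the strong Markov property at $t_n$ and the independence of the particles, this has conditional probability at most $(1 - e^{-\delta})^K \leq \delta^K = \mu^{-2K}$. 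A second union bound over grid pairs contributes $O(\De_r^2 \mu^{2-2K})$, also $o(1)$ since $K = \Theta(\mu)$. Summing the two errors and choosing $\mu_1(\ep_1,r)$ large enough completes the proof.

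The main obstacle is bookkeeping rather than conceptual: since $c_r$ (and hence $c'_r$) is exponentially small in $\De_r$, one must cleanly separate the roles of $r$, kept fixed, and $\mu$, sent to infinity, so that the $\mu\De_r$ in the Chernoff exponent genuinely beats the $r$-dependent constants. A secondary point is to calibrate the grid spacing $\delta$ so that both the grid union bound and the continuum correction $\delta^K$ are simultaneously negligible; the choice $\delta = \mu^{-2}$ works because $K$ grows linearly in $\mu$, making $\delta^K$ superpolynomially small.
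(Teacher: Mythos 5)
Your proof is correct, but it takes a noticeably more quantitative route than the paper's. The paper observes that any group of $8\De_r$ red particles starting anywhere in $V_r(i)$ has a uniformly positive probability $f(r)>0$ (depending only on $r$) of jointly covering every space--time point of $\kB_r(i,j)$; on $\kN_r(i,j)$ one can then partition the particles into $q=\lfloor\ga_0\mu/8\rfloor$ disjoint groups of at least $8\De_r$ particles each, use independence of the groups, and conclude $P^\mu\{\kE_r(i,j)\mid\kF((j-1)\De_r)\}\leq[1-f(r)]^q\to 0$ as $\mu\to\infty$. This is a ``soft'' argument: it never needs a heat-kernel estimate, a Chernoff bound, or a discretization of the block; it only needs $f(r)>0$, which it gets by specifying a single feasible joint trajectory. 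Your argument replaces this with a quantitative one: a uniform (albeit exponentially small in $\De_r$) lower bound $c_r$ on $p_t(y,x)$, a Chernoff bound showing each grid point carries at least $K=\Theta_r(\mu)$ particles, a time-grid of spacing $\mu^{-2}$, and a necessary-condition estimate (every particle must jump) to control what happens between grid times. Each step is sound, and the bookkeeping issue you flag -- $r$ fixed, $\mu\to\infty$, $c_r$ small but constant -- is exactly the right thing to track, and you handle it correctly. The trade-off is that your version has more moving parts (a union bound over $O(\De_r^2\mu^2)$ grid pairs, a continuum-correction step) where the paper has essentially none, but in exchange you obtain an explicit quantitative rate in $\mu$ rather than the paper's merely qualitative $[1-f(r)]^q$. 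One small cosmetic point: you should take $K$ to be the integer part of $\tfrac12c_r\ga_0\mu\De_r$ and note that the Chernoff bound is applied only at the grid times, so that the inclusion $\kE_r(0,1)\subset B_1\cup\bigcup_{(x,t_n)}A_{x,t_n}$ (with $B_1$ the union of Chernoff-failure events and $A_{x,t_n}$ the ``all particles at $(x,t_n)$ jump in $[t_n,t_{n+1})$'' event) is clean; but that is a presentation matter, not a gap.
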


\begin{proof} 
Note that $\kE_r(i,j)$ depends only on the paths during the time interval 
$[(j-1)\De_r,(j+1)\De_r]$ of the red particles located in the space interval 
$V_r(i)=[(i-3)\De_r,(i+4)\De_r]$ at time $(j-1)\De_r$. Since the red particles 
are interchangeable, the conditional probability in \eqref{T21} in fact only 
depends on $N(x,(j-1)\De_r)$, $x\in V_r(i)$.

It is easy to see that if there are at least $8\De_r$ particles in $V_r(i)$ 
at time $(j-1)\De_r$, then the conditional probability $f(r)$ that these 
particles after time $(j-1)\De_r$ move in such a way that there is at least 
one of them at each point $(x,t) \in \kB_r(i,k)$ satisfies $f(r)>0$ (see 
Fig.~\ref{fig-rect}). In particular, $f(r)$ can be taken to be independent 
of the location of the red particles at time $(j-1)\De_r$. In other words, on 
the event
\beq
\sum_{x \in \kV_r(i)}  N(x,(j-1)\De_r) \geq 8\De_r,
\label{function}
\eeq
we have
\beq
P^\mu\big\{[\kE_r(i,j)]^c \mid \kF((j-1)\De_r)\} \geq f(r).
\label{est6}
\eeq

Assume now that \eqref{est5} holds. Then there are at least $\ga_0\mu\De_r$ red 
particles in $V_r(i)$ at time $(j-1)\De_r$. Order these particles in an 
arbitrary way and partition them into $q= \lfloor\ga_0\mu/8\rfloor$ subsets 
of at least $8\De_r$ particles each, ignoring what is left over. For each of these 
subsets the bound in \eqref{est6} is valid. The event in the left-hand side of 
\eqref{T21} occurs if and only if the event in the left-hand side of \eqref{est6} 
fails for each of the $q$ subsets. Since the paths of disjoint sets of red particles 
are independent, the left-hand side of \eqref{T21} is therefore at most $[1-f(r)]^q$.
Now take $\mu$ so large that $[1-f(r)]^{(\gamma_0\mu/8)-1} \leq \ep_1$, i.e.,
\beq
\mu \geq \mu_1 =\frac{8}{\ga_0}\left[1 + \frac{\log\ep_1}{\log[1-f(r)]}\right].
\eeq
Then \eqref{T21} follows.
\end{proof}

Note that because $r \mapsto \gamma_r$ is non-decreasing (see Appendix \ref{appA})
the result in Lemma {\rm \ref{lem2}} also holds when $\gamma_0$ is replaced by 
$\gamma_r$ in \eqref{est5}. In what follows we will use the version with $\gamma_0$.

%%%%%%%%%%%%%%%%%%%%%%%

\subsection{Second auxiliary lemma}
\label{S3.3}

Abbreviate
\beq
\label{est8}
M = \ga_0\mu\De_r
\eeq
and, for $s\in\N$, $i_1,\dots,i_s\in\Z$ and $j_1,\dots,j_s\in\N$, introduce the 
events
\beq
\label{8D}
\kD_r(i_1,j_1,\dots,i_s,j_s) = \bigcap_{u=1}^s \kN_r(i_u,j_u).
\eeq

\begin{lem}
\label{lem3}
For all $\ep_1>0$ and $r\in\N_0$ there exists a $\mu_1=\mu_1(\ep_1,r)$ such that 
for all $\mu\geq\mu_1$ the following are true.\\
(a)
Let $i_1,\dots, i_s$ be such that their mutual differences are all $\geq 8$. Then,
for all $j\in\N$,
\beq
\label{est9b}
\begin{split}
&P^\mu\Big\{\kD_r(i_1,j,\dots,i_s,j) 
\cap \big[\cap_{u=1}^s \kE_r(i_u,j)\big]\Big\}\\
&\qquad \leq P^\mu\{\kD_r(i_1,j,\dots,i_s,j)\}\,\ep_1^s
\leq \ep_1^s, \qquad \mu\geq\mu_1(\ep_1,r).
\end{split}
\eeq
(b) Let $(i_1,j_1),\dots,(i_s,j_s)$ be distinct such that the sum of their 
componentwise mutual differences are all $\geq 10$ and $j_1,\dots,j_s$ all
have the same parity. Then
\beq
\label{est9}
P^\mu\Big\{\kD_r(i_1,j_1,\dots,i_s,j_s)
\cap \big[\cap_{u=1}^s \kE_r(i_u,j_u)\big]\Big\}
\leq \ep_1^s, \qquad \mu \geq \mu_1(\ep_1,r).
\eeq
\end{lem}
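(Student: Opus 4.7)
The plan is to exploit conditional independence of the $\kE_r$ events given the Poisson configuration at the appropriate pedestal times, apply Lemma~\ref{lem2} factor-wise, and then integrate. Part (b) should reduce, essentially, to part (a) via an inductive conditioning argument that uses the same-parity hypothesis to separate the different time-layers in the filtration.

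For (a), I would first observe that each $V_r(i) = [(i-3)\De_r, (i+4)\De_r)$ has length $7\De_r$, so the assumption that the $i_u$'s differ pairwise by at least $8$ makes $V_r(i_1),\dots,V_r(i_s)$ mutually disjoint. Consequently the events $\kN_r(i_u,j)$ lie in $\kF((j-1)\De_r)$ and are independent (Poisson counts on disjoint sets), while each $\kE_r(i_u,j)$ depends only on the trajectories during $[(j-1)\De_r,(j+1)\De_r]$ of the red particles occupying $V_r(i_u)$ at time $(j-1)\De_r$. Because disjoint populations of red particles move independently, the $\kE_r(i_u,j)$'s are conditionally independent given $\kF((j-1)\De_r)$. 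On the event $\kD_r(i_1,j,\dots,i_s,j)$ each hypothesis of Lemma~\ref{lem2} is satisfied, yielding
\[
P^\mu\Big\{\bigcap_{u=1}^s \kE_r(i_u,j) \,\Big|\, \kF((j-1)\De_r)\Big\}
= \prod_{u=1}^s P^\mu\{\kE_r(i_u,j) \mid \kF((j-1)\De_r)\} \leq \ep_1^s,
\]
and multiplying by $\mathbf{1}_{\kD_r}$ and taking expectations finishes \eqref{est9b}.

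For (b), I would group the pairs according to their $j$-coordinate: let $J_1<\cdots<J_m$ be the distinct values attained and let $L_k$ be the set of indices $u$ with $j_u=J_k$. Define $A_k=\bigcap_{u\in L_k}[\kN_r(i_u,J_k)\cap\kE_r(i_u,J_k)]$. Within a single layer, the pairs share $j_u=J_k$, so the componentwise-sum hypothesis forces their $i$-coordinates to differ by at least $10$, and the argument from (a) gives
\[
P^\mu\{A_k \mid \kF((J_k-1)\De_r)\} \leq \ep_1^{|L_k|}.
\]
The same-parity hypothesis enters decisively at the next step: it forces $J_{k+1}\geq J_k+2$, hence $(J_k+1)\De_r \leq (J_{k+1}-1)\De_r$, so that $A_1\cap\cdots\cap A_{k}\in\kF((J_{k+1}-1)\De_r)$, the precise filtration at which the next layer's conditional bound is valid. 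A backward telescoping (condition first on $\kF((J_m-1)\De_r)$, then on $\kF((J_{m-1}-1)\De_r)$, and so on) then produces $P^\mu(\cap_k A_k)\leq \prod_k \ep_1^{|L_k|}=\ep_1^s$, as claimed.

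The main obstacle is not any single estimate but the measurability bookkeeping: I will need to verify carefully that each $\kE_r(i_u,j)$ depends only on the red particles coming from the pedestal $\kV_r(i_u,j)$, so that the conditional-independence step really factorizes across disjoint pedestals, and that the same-parity assumption genuinely implies the filtration nesting required for the telescoping in (b). Both verifications rest on the geometry of $V_r(i)$ and on the $2\De_r$ temporal span of $\wt\kB_r(i,j)$; once these are in place, the rest is a routine application of Lemma~\ref{lem2} and the tower property.
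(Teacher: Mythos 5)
Your proof is correct and follows essentially the same strategy as the paper's: conditional independence of the $\kE_r$ events across disjoint pedestals plus Lemma~\ref{lem2} for part (a), and a recursion in the time-layers (where the same-parity hypothesis guarantees a $2\De_r$ gap, hence the needed filtration nesting) for part (b). Your explicit layer-grouping $J_1<\cdots<J_m$ with a single backward telescoping is just a cleaner packaging of the paper's iterative peeling of the maximal $j$-coordinate.
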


\begin{proof}
(a) Write the left-hand side of \eqref{est9b} as a conditional expectation given 
$\kF((j-1)\De_r)$. Since the $j_u$'s coincide, $\kD(i_1,j,\dots,i_s,j)$ only 
depends on the sites in $\Z \times [0,(j-1)\De_r]$. On the other hand, $\kE_r(i,j)$ 
only depends on the red particles at the points in $\Z\times [(j-1)\De_r,(j+1)\De_r)$. 
As in the first part of the proof of Lemma~\ref{lem2}, the conditional distribution 
of $\cap_{u=1}^s \kE_r(i_u,j)$ given $\kF((j-1)\De_r)$ only depends on $N(x,(j-1)
\De_r)$, $x\in\Z$. In fact, the conditional distribution of $\kE_r(i_u,j)$ given 
$\kF((j-1)\De_r)$ only depends on $N(x,(j-1)\De_r)$, $x \in V_r(i_u)$. The collections 
of red particles counted by $N(x,(k-1)\De_r)$, $x\in V_r(i_u)$, for different $i_u$ 
are disjoint, because the intervals $V_r(i_u)$ for different $i_u$ are disjoint. 
It follows that $\kE_r(i_u,j)$, $1\leq u\leq s$, are conditionally independent 
given $\kF((j-1)\De_r)$. Therefore the left-hand side of \eqref{est9b} is bounded 
from above by
\beq
\begin{aligned}
&E^\mu\Big\{
P^\mu\big\{\kD_r(i_1,j,\dots,i_s,j) \mid \kF((j-1)\De_r)\big\}\\
&\qquad \times \prod_{u=1}^s
P^\mu\big\{\kE_r(i_u,j) \mid \kF((j-1)\De_r)\big\}\Big\}.
\end{aligned}
\eeq
However, on the event $\kD(i_1,j,\dots,i_s,j)$, \eqref{est5} holds for 
$i\in\{i_1,\dots,i_s\}$, and so we see from Lemma~\ref{lem2} that
\beq
P^\mu\{\kE_r(i_u,j)\mid \kF((j-1)\De_r)\} \leq \ep_1,
\eeq
provided we take $\mu\geq\mu_1 = \mu_1(\ep_1,r)$. This gives the first inequality
in \eqref{est9b}. The second inequality is trivial.

\medskip\noindent
(b) Put
\beq
j = \max\{j_1,\dots,j_s\},
\eeq
and suppose, without loss of generality, that there exists a $1 \leq \bar{u} 
\leq s$ such that
\beq
j_u < j \text{ for } u \leq \bar{u} \text{ and } j_u = j \text{ for } j > \bar{u}. 
\eeq
Note that
\beq
\begin{split}
&\kD_r(i_1,j_1,\dots,i_s,j_s)\\
&\qquad = \kD_r(i_1,j_1,\dots,i_{\bar{u}},j_{\bar{u}}) 
\cap \kD_r(i_{\bar{u}+1},j_{\bar{u}+1},\dots,i_s,j_s).
\end{split}
\eeq
Note further that $j_1,\dots,j_{\bar{u}} \leq k-2$, because all $j_u$ have the 
same parity. This implies that $\kD_r(i_1,k_1,\dots,i_{\bar{u}},j_{\bar{u}})$ 
is $\kF((j-1)\De_r)$-measurable. Consequently, as in the proof of part (a), on 
the event $\kD_r(i_1,j_1,\dots,i_{\bar{u}},j_{\bar{u}})$ we have
\beq
\begin{split}
&P^\mu\big\{\cap_{u=\bar{u}+1}^s \kE_r(i_u,j_u) \mid \kF((j-1)\De_r)\big\}\\[0.2cm]
&\qquad =P^\mu\big\{\cap_{u=\bar{u}+1}^s \kE_r(i_u,j) \mid \kF((j-1)\De_r)\big\}\\
&\qquad = \prod_{u=\bar{u}+1}^s P^\mu\big\{\kE_r(i_u,j) \mid \kF((j-1)\De_r)\big\}
\leq \ep_1^{s-\bar{u}}.
\end{split}
\eeq
By taking the conditional expectation with respect to $\kF((j-1)\De_r)$ and using 
part (a), we obtain
\beq
\label{est10}
\begin{split}
&P^\mu\big\{\kD_r(i_1,j_1,\dots,i_s,j_s) 
\cap \big[\cap_{u=1}^s \kE_r(i_u,j_u)\big]\big\}\\
&\leq E^\mu\Big\{\kD_r(i_1,j_1,\dots,i_u,j_u) 
\cap \big[\cap_{u=1}^{\bar{u}}
\kE_r(i_u,j_u)\big]~\big|~\kF((j-1)\De_r)\Big\}\,\ep_1^{s-\bar{u}}\\
&= P^\mu \Big\{\kD_r(i_1,j_1,\dots,i_{\bar{u}},j_{\bar{u}}) 
\cap \big[\cap_{u=1}^{\bar{u}} \kE_r(i_u,j_u)\big]\Big\}\,\ep_1^{s-\bar{u}}.
\end{split}
\eeq
The proof can now be completed via a recursive argument. Indeed, the left-hand side 
of \eqref{est10} deals with the probability of events indexed by $s$ pairs $(i_u,j_u)$ 
with $j_u \leq j$ and estimates this probability in terms of probabilities of events 
indexed by pairs $(i_u,j_u)$ with $j_u \leq j-1$ (and powers of $\ep_1$). We can 
therefore iterate the estimate until it only contains powers of $\ep_1$.
\end{proof}

%%%%%%%%%%%%%%%%%%%

\subsection{Second proposition}
\label{S3.4}

Associated with $\kH(t)$ is the collection of pairs $\Ga_r(t)$ introduced in 
\eqref{gamma}. Because the jumps of the green particle have size 1, $\Ga_r(t)$ 
when viewed as a subset of $\Z^2$ is connected, i.e., for any two pairs 
$(i',j'),(i'',j'')\in\Ga_r(t)$ there is a path in $\Ga_r(t)$ that runs from 
$(i',j')$ to $(i'',j'')$. In other words, $\Ga_r(t)$ is a so-called \emph{lattice 
animal} containing the origin. We claim that with probability at least $1-C_1
e^{-C_2t}$ this lattice animal contains at most $\l+t \leq 3t$ sites. Indeed, 
this is so because $\kH$ can go from an $r$-block to an adjacent $r$-block in 
only two ways:
\begin{itemize} 
\item[(i)] 
It crosses one of the time lines $j\De_r$, $j\in\N$, without making a jump. 
Since the time between two successive such crossings is $\De_r$, at most 
$t/\De_r$ such crossings can occur up to time $t$. 
\item[(ii)] 
It makes a jump. By the definition of $\Xi(l,t)$, for $\Ga(t)\in\Xi(\l,t)$ 
there are exactly $\l$ such jumps up to time $t$.
\end{itemize}
Now, it is well known that there exist constants $C_3,C_4$ such that the number of 
lattice animals of size $3t$ containing the origin is bounded from above by 
$C_3e^{C_4t}$. Thus, if we define
\beq
\kW_r(t) = \text{ collection of possible sets } \Ga_r(t),
\eeq
then we have proved that, with probability at least $1-C_1e^{-C_2t}$,
\beq
|\kW_r(t)| \leq C_3e^{C_4t}.
\label{est13}
\eeq

\medskip\noindent
The $r$-block corresponding to a point $(i,j)\in\Ga_r(t)$ can be either bad or good. 
We will call a pair $(i,j)\in\Ga_r(t)$ bad or good according as $\kB_r(i,j)$ is bad 
or good. It is immediate from (\ref{ph11}--\ref{est14}) that, outside the event 
$\kE_1(t)\cup\kE_2(t)$, the number of bad $r$-blocks in $\Ga_r(t)$ is at most 
$3\ep_0 C_0^{-6r}t$. Together with \eqref{est3}, this proves that $\kH(t)$ spends 
only a small fraction of its time in bad blocks. We therefore only need to deal 
with the subset of good pairs in $\Ga_r(t)$. Of particular interest will be the 
following subset of $\Ga_r(t)$:
\beq
\La_r(t) = \big\{(i,j)\colon\, \kB_r(i,j) \cap \kH(t) \neq \emptyset,\,
\kB_r(i,j) \text{ is good}, \, \kE_r^*(i,j) \text{ occurs}\big\},
\eeq
where $\kE_r^*(i,j)$ is the event that $\kB_r(i,j)$ contains a point that is not 
visited by any of the red particles coming from $\kV_r(i,j)$.

\begin{prop}
\label{prop4}
There exist a $C_1,\dots,C_6>0$ such that for all $0<\ep_1<1$, $\mu\geq\mu_1(\ep_1,r)$ 
and $t$ sufficiently large,
\beq
\label{est11}
P^\mu\{|\La_r(t)| \geq \ep_1 t\} \leq C_1e^{-C_2t} 
+ C_3e^{C_4t}2^{3t}\ep_1^{\lfloor C_5 + C_6\ep_1t\rfloor}.
\eeq
\end{prop}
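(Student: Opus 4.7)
The plan is to estimate $P^\mu\{|\La_r(t)|\geq\ep_1 t\}$ by a double union bound: first over the realization of the lattice animal $\Ga_r(t)$, and then over sub-collections of $\Ga_r(t)$ on which Lemma~\ref{lem3}(b) can be invoked. The first summand $C_1e^{-C_2t}$ in \eqref{est11} absorbs the event $\kE_1(t)$ together with the complement of \eqref{est13}. Outside this event, the outer sum contributes the entropy factor $C_3e^{C_4t}$ from \eqref{est13}, and the task reduces to controlling, for each fixed animal $\Ga$ of size at most $3t$, the probability that $|\La_r(t)\cap\Ga|\geq\ep_1 t$.

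Before invoking Lemma~\ref{lem3}, I would verify that the defining event of $\La_r(t)$ at any pair $(i,j)$ forces $\kN_r(i,j)\cap\kE_r(i,j)$. The identification $\kE_r^*(i,j)=\kE_r(i,j)$ is immediate from the definitions. Goodness of $\kB_r(i,j)$ implies $\kN_r(i,j)$ because $V_r(i)$, of length $7\De_r$, decomposes into $7\De_r^{5/6}$ disjoint intervals of length $C_0^r=\De_r^{1/6}$; on each one goodness provides the lower bound $\gamma_r\mu C_0^r$ for the particle count at time $(j-1)\De_r$, and summing yields $\sum_{x\in V_r(i)}N(x,(j-1)\De_r)\geq 7\gamma_r\mu\De_r\geq\gamma_0\mu\De_r$ since $r\mapsto\gamma_r$ is non-decreasing.

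For the inner bound, I would greedily extract, from any set $A\subset\Ga$ of size $\lceil\ep_1 t\rceil$, a \emph{sparse} sub-collection $S\subset A$ of size $k\geq\lfloor C_5+C_6\ep_1 t\rfloor$ whose $j$-coordinates have a common parity and whose members are pairwise at componentwise $\ell_1$-distance at least $10$: first restrict to the larger parity class (losing a factor of two), then repeatedly pick a pair and discard the $\ell_1$-ball of radius $10$ around it (losing a further factor bounded by the number of lattice points in such a ball). The number of sparse sub-collections inside $\Ga$ is at most $\binom{|\Ga|}{k}\leq 2^{3t}$, and for each fixed $S$ the inclusion $S\subset\La_r(t)$ together with the previous paragraph yields the event $\kD_r(i_1,j_1,\dots,i_k,j_k)\cap[\cap_{u=1}^k\kE_r(i_u,j_u)]$, so Lemma~\ref{lem3}(b) gives $P^\mu\{S\subset\La_r(t)\}\leq\ep_1^k$. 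Multiplying $C_3e^{C_4t}$, $2^{3t}$ and $\ep_1^k$ produces the second summand of \eqref{est11}.

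The main obstacle is really just the sparsification: the exponent $\lfloor C_5+C_6\ep_1 t\rfloor$ must stay of order $\ep_1 t$, so the greedy procedure must lose only a constant (in $t$ and $\ep_1$) factor. The parity split and the $\ell_1$-ball removal are engineered to produce exactly the separation hypotheses of Lemma~\ref{lem3}(b); a stronger independence between the events $\kE_r(i,j)$ is not available, since red particles can migrate between neighboring blocks, which is why the lemma must be applied in its ``spatially thinned, same parity in time'' form rather than as a straightforward product bound.
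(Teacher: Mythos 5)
Your proposal is correct and follows essentially the same route as the paper: union bound over lattice animals, extraction of a well-separated same-parity sub-collection, and application of Lemma~\ref{lem3}(b). In fact you are somewhat more explicit than the paper at two points the paper treats tersely --- the derivation of $\kN_r(i,j)$ from goodness by tiling $V_r(i)$ with $\kQ_r$-intervals, and the greedy parity-then-$\ell_1$-ball sparsification that justifies the exponent $\lfloor C_5+C_6\ep_1 t\rfloor$ --- both of which match what the paper's argument implicitly requires.
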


\begin{proof}
The idea of the proof is to partition the event $\{|\La_r(t)|\geq\ep_1t\}$ into 
a union of subevents of the form analyzed in Lemma~\ref{lem3}, to estimate the 
probability of each of these subevents by means of Lemma~\ref{lem3}, and afterwards 
sum over all the ways to do the partition.

Consider a sample point for which $|\La_r(t)| \geq \ep_1 t$, but for which $\kE_1(t)$ 
does not occur (i.e., the green particle makes $\leq 2t$ jumps up to time $t$). 
Then, since $\La_r(t)\subset\Ga_r(t)\subset\Z^2$, there exist at most $\lfloor
C_5+C_6\ep_1 t\rfloor$ points $(i_u,j_u)\in\La_r(t)$ with the sum of their 
componentwise differences all $\geq 10$, with $C_5,C_6>0$ independent of 
$\La_r(t)$ and $\mu$. Since $\kB_r(i,j)$ is good when $(i,j)\in\La_r(t)$, we have
\beq
\sum_{x \in V_r(i)} N(x,(j-1)\De_r) \geq \ga_r \mu \De_r \geq \ga_0\mu\De_r = M,
\label{est25}
\eeq
i.e., the event $\kN_r(i,j)$ defined in \eqref{est5} occurs. Let
\beq
\wh\La_r(t) = \big\{(i,j)\colon\, 
\kB_r(i,j) \cap \kH(t) \neq \emptyset,\,\kN_r(i,j) \text{ and } \kE_r^*(i,j)
\text{ occur}\big\}.
\eeq
Then $\wh\La_r(t)\supset\La_r(t)$, and so the points $(i_u,j_u)$, $1\leq u\leq 
C_5+C_6\ep_1t$, all lie in $\wh\La_r(t)$. This means that $\kD_r(i_1,j_1,\dots,
i_s,j_s)$ occurs with $s = \lfloor C_5 + C_6\ep_1t\rfloor$ and $M$ as in 
\eqref{est8} (recall \eqref{8D}). In addition,
\beq
\cap_{u=1}^s \kE_r(i_u,j_u)
\eeq
occurs (recall \eqref{T21}).

The above observations show that $|\La_r(t)|>\ep_1t$ can occur for $\mu\geq
\mu_1(\ep_1,r)$ and $t$ large enough only if, for some possible choice of 
the $(i_u,j_u)$'s,
\beq
\kD(i_1,j_1,\dots,i_s,j_s) \cap \big[\cap_{u=1}^s \kE_r(i_u,j_u)\big]
\label{est8DK}
\eeq
occurs. Lemma~\ref{lem3} shows that, for any permissible choice of the $(i_u,j_u)$'s, 
the probability of \eqref{est8DK} is at most $\ep_1^s$. Consequently,
\beq
\begin{split}
&P^\mu\{|\La_r(t)| \geq \ep_1 t\}\\
&\qquad \leq \sum_{\Ga_r} \sum_{\Th \subset \Ga_r} 
P^\mu\big\{\kD_r(i_1,j_1,\dots,i_s,j_s) 
\cap \big[\cap_{u=1}^s \kE_r(i_u,j_u)\big]\big\}\\
&\qquad \leq \sum_{\Ga_r} \sum_{\Th \subset \Ga_r} \ep_1^s,
\label{est26*}
\end{split}
\eeq
provided $\mu \geq \mu_1(\ep_1,r)$ and $t$ is sufficiently large. Here, the sum 
over $\Ga_r$ runs over all possible realizations of the random set $\Ga_r(t)$, 
and $\Th$ runs over all choices of the $(i_u,j_u)$'s whose sum of componentswise 
differences are all $\geq 10$, and are such that the $j_u$'s all have the same 
parity. 

Now assume that our sample point lies outside $\kE_1(t)$, which happens with a 
probability at least $1-C_1e^{-C_2t}$. Then
\beq
\sum_{\Th \subset \Ga_r} 1 \leq 2^{3t},
\eeq
because $|\Ga_r|\leq 3t$, as we saw before. Moreover, by \eqref{est13}, we have
\beq
\sum_{\Ga_r} 1 \leq C_3e^{C_4t},
\eeq
since $\Ga_r$ is a lattice animal that contains the origin and has size at most $3t$. 
Combining these estimates, we find that for $\mu\geq\mu_1(\ep_1,r)$ and $t$ sufficiently
large,
\beq
P^\mu\{|\La_r(t)| \geq \ep_1 t\} 
\leq C_1e^{-C_2t} + C_3e^{C_4t}2^{3t}\ep_1^{\lfloor C_5 + C_6\ep_1t\rfloor}.
\label{estHK}
\eeq
\end{proof}

%%%%%%%%%% SECTION 4 %%%%%%%%%%%%%%%%%%%%%%%%%%%%%%%%%%

\section{Proof of Theorem \ref{thm1}}
\label{S4}

\begin{proof}
With Propositions~\ref{prop1} and \ref{prop4} in hand, the proof of Theorem~\ref{thm1} 
is routine. We distinguish four possible types of $r$-blocks, 
\beq
\mathrm{(bad,occupied)},\quad \mathrm{(bad,vacant)},
\quad \mathrm{(good,occupied)}, \quad \mathrm{(good,vacant)}, 
\eeq
where an $r$-block $\kB_r(i,j)$ is called \emph{occupied} when $\kE_r^*(i,j)^c$ 
occurs, i.e., every point in $\kB_r(i,j)$ is visited by a red particle coming
from $\kV_r(i,j)$, and is called \emph{vacant} otherwise. The number of $r$-blocks 
of type (bad,occupied) that intersect $\kH(t)$ will be denoted by $N_r(t;
\mathrm{(bad,occupied)})$, and similarly for the other types.

We have shown in Proposition~\ref{prop1} that
\beq
\label{Nest1}
\begin{split}
N_r(t;\mathrm{(bad)}) 
&= N_r(t;\mathrm{(bad,occupied)}) + N_r(t;\mathrm{(bad,vacant)})\\
&\leq |\wt\Ga_r(t)| \leq 3\ep_0 C_0^{-6} t
\end{split}
\eeq
outside an event of probability at most $3t^{-K}$, provided $K,\ep_0>0$, $\mu\geq
\mu_0(K,\ep_0,r)$ and $t$ is sufficiently large. We have further shown in 
Proposition~\ref{prop4} that
\beq
\label{Nest2}
N_r(t;\mathrm{(good,vacant)}) = |\La_r(t)| \leq \ep_1 t
\eeq
outside an event of probability at most $C_1e^{-C_2t}+C_3e^{C_4t}2^{3t}
\ep_1^{\lfloor C_5 + C_6\ep_1t\rfloor}$, provided $0<\ep_1<1$, $\mu\geq
\mu_1(\ep_1,r)$ and $t$ is sufficiently large. Since we can choose 
$\ep_0,\ep_1$ arbitrarily small, it follows from (\ref{Nest1}--\ref{Nest2}) 
that there exists a function $\mu\mapsto\wh{t}(t)$ from $(0,\infty)$ to 
itself such 
that
\beq
\begin{split}
&\frac1t\,\big[N(r,t;\mathrm{(bad,vacant)})
+ N(r,t;\mathrm{(good,vacant)})\big] \to 0\\
&\text{in } P^\mu\text{-probability as } \mu,t\to\infty 
\text{ such that } t\geq\wh t(\mu).
\end{split}
\eeq
According to the projection argument given in the lines just before \eqref{est3}, 
this in turn implies that
\beq
\begin{split}
&\frac1t\,\big[\text{total time that $\kH(t)$ is in an $r$-block}\\
&\quad \text{ that is (bad,occupied) or (good,occupied)\big]} \to 1\\
&\text{in } P^\mu\text{-probability as } \mu,t\to\infty 
\text{ such that } t\geq\wh t(\mu).
\end{split}
\label{problim}
\eeq

Finally, let $L$ be the infinitesimal generator of the random walk performed 
by the green particle when the space-time trajectories of the red particles,
given by $N$ defined in \eqref{Ndef}, are fixed. Then
\beq
(LI)(x,t) = \begin{cases} 
v'  &\text{ if } N(x,t) \geq 1,\\
v'' &\text{ if } N(x,t) = 0,
\end{cases}
\qquad I(x,t)=(x,t).
\label{generator}
\eeq
Recall $\kG$ defined in \eqref{Gdef}. By a standard martingale property, we 
have (see e.g.\ \cite{KeSi}(Lemma 10))
\beq
\label{mart}
\lim_{t\to\infty} \frac1t\,\left[\kG(t)-\int_0^t (LI)(\kG(s),s)\,ds\right] = 0, 
\qquad P^\mu\text{-a.s.}
\eeq
Combining \eqref{problim} and \eqref{generator}, we see that $(LI)(\kG(s),s) = v'$ 
on a set of $s$-values in $[0,t]$ that converges in $P^\mu$-probability to all of 
$[0,t]$. Therefore Theorem~\ref{thm1} follows from \eqref{mart}.
\end{proof}

%%%%%%%%%%%%%% Appendix %%%%%%%%%%%%%%%%%%%%%%%%%%%%%%%%%

\appendix

\section{Uniformity in $\mu$}
\label{appA}

Our main focus in this Appendix is on \cite[Section 4]{KeSi}, and we will adopt the 
notation used there. Consequently, the arguments given below cannot be read 
independently. Moreover, in \cite{KeSi} at some places a choice of parameters 
depends on $\mu$. However, as we will check below, this dependence does not 
require $\mu\to\infty$. 

Note that in Proposition~\ref{propKeSi} we choose the parameters in the order 
$K,\ep_0,r,\mu,t$. Since in Theorem~\ref{thm1} we let $t\to\infty$ first, it 
suffices to check that inequalities hold for large $t$ when $\mu$ is fixed. 

\medskip\noindent
{\bf 1.} As shown in \cite[Sections 1 and 4]{KeSi}, $C_0$ and $(\gamma_r)_{r\in\N_0}$ 
mentioned in Section~\ref{S2}, Part 2, are chosen such that
\beqa
\label{const1}
&0 <\ga_0 \prod_{j=1}^\infty \big[1-2^{-j/4}\big]^{-1} \leq \frac12,\\
\label{const2}
&\ga_1=\ga_0,\qquad \ga_{r+1} 
= \ga_0\prod_{j=1}^r\big[1-C_0^{-j/4}\big]^{-1}, \quad r\in\N,
\eeqa
where $C_0$ is taken so large that, for all $r\in\N$,
\beqa
\label{const3}
&C_0^{-r/2}-\Big(1-\frac{C_4r\log C_0}{C_0^r}\Big)
\big(1-e^{-C_0^{-r/2}}\big)[1-C_0^{-r/4}]^{-1}\le -\frac12C_0^{-3r/4},\\
\label{const4}
&9C_0^{12(r+1)}\exp[-\frac 12 \ga_0 \mu C_0^{r/4}]\leq 1,
\eeqa
where $C_4$ is the constant in \cite[Lemma 5]{KeSi} (and $\mu$ takes over the role
of $\mu_A$ in \cite{KeSi}). It is not hard to check in the proof of \cite[Lemma 5]{KeSi} 
that $C_4$ in \eqref{const3} can be chosen independently of $\mu$. As was already 
checked in \cite{KeSi}, $C_4$ is also independent of $C_0$, so that we can choose 
$(\ga_r)_{r\in\N_0}$ and $C_0$ independently of $C_4$ as well. In other words, once 
a value for $C_4$ has been determined on the basis of \cite[Lemma 5]{KeSi}, we may 
safely let $\mu\to\infty$.

\medskip\noindent
{\bf 2.} The next place were the uniformity in $\mu$ needs to be checked is 
\cite[Eq.\ (4.16--4.17)]{KeSi}. For any choice of $K>0$ and $K_4>0$, we define 
$R(t)$ by \cite[Eq.\ (4.16)]{KeSi}, the position at time $t$ of the right-most red 
particle. The first three inequalities in \cite[Eq.\ (4.17)]{KeSi} continue to be 
valid, uniformly in $\mu$. We can choose $K_4$ to make also the fourth 
inequality in \cite[Eq.\ (4.17)]{KeSi}valid, uniformly in $\mu$, by observing that 
$U_r(x,t)$ in \eqref{badcubes} has a Poisson distribution with mean $\mu C_0^r$, 
so that, for any $\th>0$,
\beq
\begin{split}
P^\mu\{U_r(x,t) \leq \tfrac12\mu C_0^r\}
&= P^\mu\left\{e^{-\theta U_r(x,t)} \geq e^{-\tfrac12\th\mu C_0^r}\right\}\\
&\leq e^{\tfrac12\th\mu C_0^r}\,E^\mu\left\{e^{-\theta U_r(x,t)}\right\}
=e^{\tfrac12\th\mu C_0^r}\,e^{-\mu C_0^r(1-e^{-\th})}.
\end{split}
\eeq
Pick $\th>0$ so small that $(1-e^{-\th})-\tfrac12\th \geq \tfrac14\th$, 
to obtain
\beq
\begin{split}
\sum_{r \geq R(t)} P\left\{U_r(x,t) \leq \tfrac12 \mu C_0^r\right\}
&\leq e^{-\tfrac14\th\mu C_0^{R(t)}}\,
\sum_{r \geq R(t)} e^{-\tfrac14\th\mu C_0^{r-R(t)}}\\
&\leq K_{20} e^{-\tfrac14\th\mu C_0^{R(t)}}
\end{split}
\eeq
for some constant $K_{20}$ that depends on $C_0$ only, as long as $\mu$ is bounded
away from 0, say $\mu \geq 2$. It is immediate from this estimate that the sum of 
$P\{U_r(x,t)\leq\tfrac12\mu C_0^r\}$ over $(x,s)$, with $s \in [-\De_r,t+\De_r)$ integer 
and $x$ such that $\kQ_r(x)$ intersects $\kC(t\log t + 3\De_r)$, when summed over 
$r\geq R(t)$ is no more than $t^{-K}$, provided $\mu \geq 2$ and $K_4$ (or, equivalently, 
$R(t)$) is sufficiently large, independently of $\mu$. Thus, we conclude that
\cite[Eq.\ (4.17)]{KeSi} is valid uniformly in $\mu\geq 2$.

\medskip\noindent
{\bf 3.} \cite[Lemmas 5--6]{KeSi} remain valid for $\mu>0$, while also
\cite[Lemma 7]{KeSi} remains valid, even with $C_5$ independent of $\mu$, 
as long as $\mu$ is bounded away from 0, say $\mu\geq 2$. Indeed, 
the inequality in \cite[Eq.\ (4.29)]{KeSi} is based on the estimate
\beq
E\{T\} = \la \nu^2\rho_{r+1} \leq 2^2 \la \leq 6^2
\frac{t+\l}{\nu \De_{r+1}}
\eeq
and on Bernstein's inequality (see \cite[Eq.\ (4.37) and subsequent lines]{KeSi} 
for the appropriate notation. In the case of a binomial random variable 
$T$ corresponding to $\la \nu^2$ trials with success probability $\rho$, 
Bernstein's inequality gives
\beq
P\{T \geq \al E\{T\}\} = P\{T - E\{T\} \geq (\al-1)E\{T\}\}
\le \exp\big[-\tfrac14(\al-1)E\{T\}\big]
\eeq
for $\al>1$ (see \cite[Exercise 4.3.14]{ChTe}. Thus, \cite[Lemma 7]{KeSi} 
holds uniformly in $\mu\geq 2$.

\medskip\noindent
{\bf 4.} It remains to verify the uniformity of \cite[Proposition 8]{KeSi}, 
i.e., Proposition~\ref{propKeSi} above. To do so, we consider a sample point 
where \cite[Eq.\ (4.39)]{KeSi} holds for all $\mu\geq 2$, $r \geq R(t)$ and $\l\in\N_0$, 
and where \cite[Eq. (4.40)]{KeSi} holds independently of $\mu$. Then there exists an 
$r_0$ independent of $\mu\geq 2$ such that \cite[Eq.\ (4.41)]{KeSi} holds all $r \in 
[r_0,R(t)-1]$, $\l \in\N_0$, $\mu\geq 2$ and $t$ sufficiently large. Moreover, 
these estimates hold for all sample points outside an event of probability at most
\beq
\label{last}
t^{-K} + \sum_{r=1}^{R(t)-1} \sum_{\l \in \N_0} 
\exp\Big[-(t+\l)C_5\kappa_0\exp\Big[\tfrac12\ga_r\mu C_0^{\tfrac14}\Big]\Big] 
\leq 2t^{-K}.
\eeq
This proves Proposition~\ref{propKeSi} with the required uniformity in $\mu$.
The only requirement for $r_0$ is that the last inequality in \cite[Eq.\ (4.41)]{KeSi} 
holds. Thus, we only need
\beq
6\kappa_0[C_0]^{12+6r} \exp\Big[-\frac{\ga_0\mu}{4}C_0^{\tfrac14 r}\Big] 
\leq \ep_0.
\eeq

\bigskip

\noindent {\bf{Acknowledgements.}} 
Part of the research for this paper was carried out in the Spring of 2009, during 
visits by the authors to the Mittag-Leffler Institute in Djursholm in Sweden and
to the Mathematical Institute of Leiden University in The Netherlands. The authors
thank the Swedish Research Council for support. FdH is supported by ERC Advanced 
Grant 67356 VARIS. VS is supported by CNPq (Brazil) grants 308787/11-0 and 
484801/11-0 and by Edital Faperj grant CNE 2011. VS also thanks the ESF Research
Network Program RGLIS for support. 

%%%%%%%%%%%%%%%%%%%%%%%%%%%%%%%%%%%%%%%%%%%%%%%%

\end{document}